\newtheorem{theorem}{Theorem}[section]
\newtheorem{claim}[theorem]{Claim}
\newtheorem{prop}[theorem]{Proposition}
\theoremstyle{definition}
\newtheorem{definition}[theorem]{Definition}
\newtheorem{remark}[theorem]{Remark}
\newcommand{\Z}{\mathbb{Z}}
\newcommand{\RR}{\mathbb{R}}
\newcommand{\K}{\mathcal{K}}
\newcommand{\LL}{\mathcal{L}}
\newcommand{\s}{\mathcal{S}}
\newcommand{\ol}{\overline}
\newcommand{\ee}{\stackrel{e}{=}}
\begin{document}
\author{Keiko Kawamuro}
\address{Department of Mathematics, Rice University, Houston, TX 77005}
\email{kk6@rice.edu}

\title{Connect sum and transversely non simple knots}
\subjclass[2000]{Primary 57M25, 57M27; Secondary 57M50}

\keywords{Transverse knots, Braids, Connect sum.}

\begin{abstract} We prove that transversal non-simplicity is preserved under taking connect sum, generalizing Ve\'rtesi's result \cite{V}.
\end{abstract}
\maketitle
\section{Introduction}

The goal of this paper is to prove:

\begin{theorem}\label{main-thm}  
Let $K_1, K_2$ be prime knot types in $S^3$. Let $T_1, T_1'$ $($resp. $T_2, T_2')$ be transverse knots in $(S^3, \xi_{sym})$ of topological type $K_1$ $($resp. $K_2)$. Suppose that
\begin{enumerate}
\item $T_1, T_1'$ have the same self linking number but are not transversely isotopic, and 
\item $T_2, T_2'$ are transversely isotopic and cannot be transversely destabilized. 
\end{enumerate}
Then the connect sums $T_1 \# T_2$ and $T_1' \# T_2'$ are not transversely isotopic. 

{\em (}We allow the possibility that $K_1, K_2$ have the same topological type and $T_1, T_2$ are transversely isotpic.{\em )}
\end{theorem}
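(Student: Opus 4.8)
The plan is to argue by contradiction and to reduce the theorem to a cancellation law for the transverse connect sum. Suppose that $T_1 \# T_2$ and $T_1' \# T_2'$ are transversely isotopic. By hypothesis (2) the knots $T_2$ and $T_2'$ are themselves transversely isotopic, so after a transverse isotopy I may assume $T_2' = T_2$ and hence that $T_1 \# T_2$ and $T_1' \# T_2$ are transversely isotopic; it then suffices to deduce that $T_1$ and $T_1'$ are transversely isotopic, contradicting hypothesis (1). I would work entirely in the braid model, using the transverse Markov theorem: transverse knots in $(S^3,\xi_{sym})$ correspond to closed braids up to conjugation and \emph{positive} (de)stabilization, with $sl = e - n$ where $e$ is the exponent sum and $n$ the braid index. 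In this dictionary positive (de)stabilization is a transverse isotopy, while a \emph{negative} stabilization lowers $sl$ by $2$ and is exactly transverse stabilization; thus ``$T_2$ cannot be transversely destabilized'' means no braid representative of $T_2$ carries a removable negative kink. Realizing the connect sums as braided connect sums gives $sl(T_i \# T_2) = sl(T_i) + sl(T_2) + 1$, so hypothesis (1), $sl(T_1) = sl(T_1')$, guarantees that the two connect sums have equal self-linking number and cannot be separated by this classical invariant.

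Next I would analyze the connect-sum $2$-sphere. Since $K_1$ and $K_2$ are prime, the prime decomposition of $K_1 \# K_2$ is unique and any two decomposing spheres are topologically isotopic, so the problem is to make the assumed transverse isotopy respect such a sphere. For this I would invoke the braid-foliation study of composite closed braids due to Birman--Menasco: the decomposing sphere can be put in a normal form whose characteristic (braid) foliation has no inessential tiles, after which it meets the axis in a controlled pattern and splits the closed braid into a braided $K_1$-summand and a braided $K_2$-summand. Carrying this out on both $T_1 \# T_2$ and $T_1' \# T_2$ and tracking the sphere through the transverse isotopy --- using exchange moves, conjugations, and foliation simplification --- I would arrange that the isotopy carries a standard decomposing sphere of one connect sum to that of the other, and therefore restricts to a correspondence between the braided summands, identifying the $K_1$-side with $T_1$ versus $T_1'$ and the $K_2$-side with $T_2$ on both.

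It remains to control the single genuine source of non-uniqueness, the trading of a stabilization across the decomposing sphere: a negative kink lying near the sphere can be slid from one summand to the other, which simultaneously transversely stabilizes one factor and destabilizes the other while preserving the connect sum and its self-linking number (the two $\pm 2$ changes cancel). Because $T_2$ is nondestabilizable, no such slide may destabilize the $K_2$-summand, so from the pair $(T_1,T_2)$ the only reachable states have the form $(D^k(T_1), S^k(T_2))$ with $k \ge 0$, where $S$ is transverse stabilization and $D$ its inverse. For $(T_1',T_2)$ to be among these we would need $S^k(T_2) = T_2$, which forces $k = 0$ since transverse stabilization strictly lowers $sl$; hence $T_1'$ equals $D^0(T_1) = T_1$ up to transverse isotopy, the desired contradiction. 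The case in which $K_1$ and $K_2$ have the same topological type is identical once the two prime summands are labelled consistently by the sphere, the possibility $T_1 \simeq T_2$ causing no difficulty.

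I expect the decisive obstacle to be the middle step: proving a transverse refinement of the Birman--Menasco composite-braid theorem, namely that a transverse isotopy between the two connect sums can be upgraded to one preserving a decomposing sphere up to the stabilization trading above. The delicate point is that every foliation simplification and exchange move used to normalize the sphere must be realized by conjugations and \emph{positive} (de)stabilizations --- never by a negative destabilization --- for otherwise transversality, and the self-linking bookkeeping, would be broken; the one place where a negative destabilization would be forced is precisely a destabilization of the $K_2$-summand, which is exactly what hypothesis (2) forbids. Making this geometric interplay rigorous, rather than the comparatively formal cancellation argument that follows it, is the technical heart of the proof.
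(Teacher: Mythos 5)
Your skeleton matches the paper's first proof --- argue by contradiction, use the transverse Markov theorem of Orevkov--Shevchishin and Wrinkle to replace the transverse isotopy by positive stabilizations, exchange moves and braid isotopy, then analyze the decomposing sphere via Birman--Menasco's composite braid theory, with non-destabilizability of $T_2$ supplying the contradiction. But the step you defer as ``the technical heart'' is the entire content of the proof, and your model of what can go wrong there is too narrow. You assume the only ambiguity is a stabilization traded across an essentially fixed decomposing sphere, so that the reachable pairs are $(D^k(T_1),S^k(T_2))$. In the paper, after normalizing, the image $f(S)$ of the decomposing sphere of $T_1\# T_2$ and the decomposing sphere $S'$ of $T_1'\# T_2'$ can sit in five essentially different relative positions (classified by the cyclic order of the $\theta$-coordinates of the four points where the spheres meet the knot), and together they cut the closed braid into four blocks $A,B,C,D$ with
$$T_1 \ee \ol{C}\#\ol{D},\qquad T_2\ee \ol{A}\#\ol{B},\qquad T_1'=\ol{A}\#\ol{D},\qquad T_2'=\ol{B}\#\ol{C}.$$
This is a genuine redistribution of summands, not a kink slide, and the contradiction is extracted by a mechanism you do not mention: primeness of $K_2$ forces one of $\ol{A},\ol{B},\ol{C}$ to be an unknotted closed braid; Birman--Menasco's exchange reducibility of the unknot reduces that braided unknot to the $1$-strand braid by exchange moves and $\pm$ destabilizations; and the hypothesis that $T_2\ee\ol{A}\#\ol{B}$ (resp.\ $T_2'$) cannot be transversely destabilized rules out the negative destabilizations, so the unknot summand is transversely trivial and cancels, giving $T_1\sim T_1'$. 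Your write-up contains neither the case analysis nor the appeal to exchange reducibility of the unknot, and the ``cancellation law'' you reduce to is not an off-the-shelf statement in the transverse category --- it is what must be proved.

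A second point: the clean ``unique factorization up to trading stabilizations between summands'' that your final paragraphs rely on does exist, but in the Legendrian category (Etnyre--Honda), and the paper's appendix gives exactly that second proof: pass to Legendrian representatives via Epstein--Fuchs--Meyer, apply Etnyre--Honda's classification of Legendrian connected sums, and translate non-destabilizability of $T_2$ into non-positive-destabilizability of $L_2$ before cancelling. If you want to make your ``reachable states'' formalism rigorous, that is the route; as written, your transverse version of it is an unproved assertion, and the self-linking bookkeeping you offer in its place does not substitute for either argument.
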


\begin{remark}
If $T_2$ is transversely destabilizable and $T_1, T_1'$ are related to each other by a negative flype move (see \cite{BW} for definition), then $T_1 \# T_2$ is transversely isotopic to $T_1' \# T_2'$.
\end{remark}

The idea behind Theorem~\ref{main-thm} was a result of Ve\'rtesi, who proved a specialized version of it in her paper \cite{V}. Her result holds only when the transversally non-simple knots in question can be distinguished by invariants in Heegaard Floer homology theory studied in \cite{NOT}. We make no such restrictions. In fact, Birman-Menasco proved the existence of  infinitely many transversely non simple knots \cite{BM} that the Heegaard Floer homology invariants do not distinguish \cite{NOT}.
 
We will give two proofs. The first proof is given in Section~\ref{sec2}. It uses the theory of transversal closed braids, and is based upon ideas in \cite{Be}, \cite{BM-composite}, \cite{BW}, \cite{OS}, \cite{W}. The second proof is given in Section~\ref{sec3}. It is inspired by a suggestion of John Etnyre that our theorem ought to follow from a theorem of Etnyre-Honda \cite{EH}, and uses techniques based upon the well-known idea that every transversal knot type can be represented by a transversal pushoff of some Legendrian knot.

\textbf{Acknowledgments } 
The author would like to thank John Etnyre for helpful comments and for pointing out that Theorem~\ref{main-thm} follows from a theorem of Etnyre and Honda. She is also grateful to the referee, who read the original manuscript carefully and suggested improvement on exposition.

\section{Proof of Theorem~\ref{main-thm}}\label{sec2} 
Throughout this paper, $T, T_{i=1,2}$ denote transversal knots in $(S^3, \xi_{sym})$ the symmetric contact structure of $S^3$. Regard $S^3$ as a one point compactification of $\RR^3$ equipped with the cylindrical coordinates $(r, \theta, z)$.  Thanks to Bennequin \cite{Be} we identify transversal knots in $(S^3, \xi_{sym})$ with closed braids in $\RR^3$ about the $z$-axis.

\begin{definition}
Suppose $T_1, T_2$ have braid presentations 
$$T_1 = b\ \sigma_{n-1} b'\ \sigma_{n-1}^{-1}, \quad T_1 = b\ \sigma_{n-1}^{-1} b'\ \sigma_{n-1}$$ 
where $n$ is the braid index and $b, b'$ are some braid words in $\sigma_1, \cdots, \sigma_{n-2},$ the standard generators of the braid group $B_n$.
See Figure~\ref{ex-move}. Then we say $T_1$ and $T_2$ are related to each other by an {\em exchange move}.
\begin{figure}[htpb!]
\begin{center}
\begin{picture}(276, 65)
\put(0,0){\includegraphics{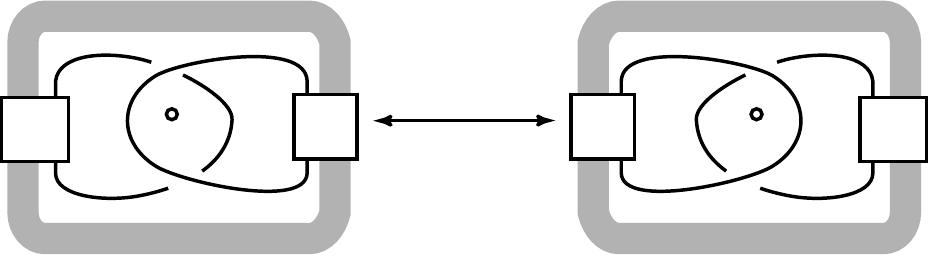}}
\put(5, 33){$b$} \put(89, 33){$b'$} \put(170, 33){$b$} \put(254, 33){$b'$} \put(40, 30){\tiny{ $z$-axis}}
\end{picture}
\caption{An exchange move between $T_1$ (left) and $T_2$ (right). Thick gray bands are $(n-1)$ parallel strands and $b, b'$ are some braidings.}\label{ex-move}
\end{center}
\end{figure}
\end{definition}

As shown in \cite[Lemma 1]{BW}, an exchange move is a composition of a positive braid stabilization and a positive braid destabilization. Thus, an exchange move is a transversal isotopy. We use the following notations:
\begin{itemize}
\item $T_1 = T_2$ if  $T_1, T_2$ are braid isotopic (conjugate);
\item $T_1 \stackrel{e}{=} T_2$ if  $T_1, T_2$ are exchange equivalent;
\item $T_1 \sim T_2$ if $T_1, T_2$ are transversely isotopic;
\item $S^+(T)$ for a transverse knot obtained by a number of positive braid stabilizations of $T$. It is known that $S^+(T) \sim T$.
\end{itemize}
Notice that $T_1 = T_2 \Rightarrow T_1 \stackrel{e}{=} T_2 \Rightarrow T_1 \sim T_2$.

\begin{definition}\label{connect sum def}  
We define the  {\em braid connect sum} $T_1 \# T_2$ of $T_1$ and $T_2$ as in Figure~\ref{connect-sum}. 
\end{definition}
\begin{figure}[htpb!]
\begin{center}
\begin{picture}(165, 100)
\put(0,0){\includegraphics{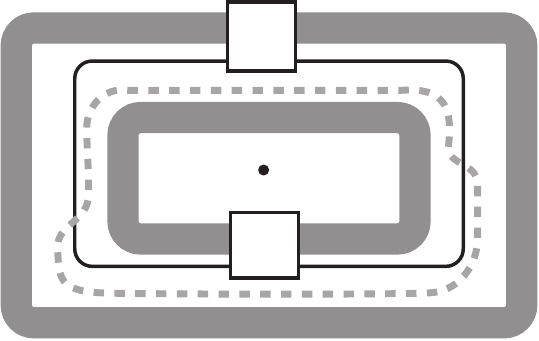}}
\put(72, 87){$T_1$}  \put(72, 25){$T_2$} \put(140, 23){$S$}
\end{picture}
\caption{Connect sum $T_1 \# T_2$ and dividing sphere $S$ (dashed). Thick bands are multi strands.}\label{connect-sum}
\end{center}
\end{figure}

This definition of connect sum is well defined thanks to Birman-Menasco \cite{BM-composite}:
\begin{theorem}\label{BM-sum} \cite[Composite braid theorem]{BM-composite}
Any composite braid can be reduced to the form in Figure~\ref{connect-sum} by exchange moves and braid isotopy.  
\end{theorem}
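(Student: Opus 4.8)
The plan is to bring a composite sphere into standard position relative to the braid axis using the Birman-Menasco braid foliation machinery. A composite closed braid $K$ comes equipped with an essential $2$-sphere $S$ meeting $K$ transversally in exactly two points and splitting $K$ into summands $K_1$ and $K_2$. Taking the braid axis $A$ (the $z$-axis) together with its fibration of $S^3 \setminus A$ by the half-planes $H_\theta = \{\theta = \text{const}\}$, I would first isotope $S$, rel $K$, into general position so that the half-planes cut out a singular foliation $\mathcal{F}$ on $S$. Its singularities come in two flavors: elliptic points, where $A$ pierces $S$, and hyperbolic points, where some $H_\theta$ is tangent to $S$; the two points of $S \cap K$ sit as distinguished marked points on leaves of $\mathcal{F}$. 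An Euler characteristic count on the sphere then relates the number of elliptic and hyperbolic singularities, giving the initial bookkeeping for the reduction.

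The core of the argument is to simplify $\mathcal{F}$ until $S$ meets $A$ in as few points as possible. I would decompose $S$ along the singular leaves of $\mathcal{F}$ into tiles and classify them by the signs of their elliptic vertices, as in the standard tiling analysis. The engine of the proof is a short list of local moves: \emph{change-of-foliation} isotopies that slide a hyperbolic singularity past an adjacent elliptic one, together with the observation that certain elliptic-hyperbolic configurations can be removed either by a braid isotopy or by an exchange move in the sense defined above. Repeatedly eliminating valence-one and valence-two elliptic vertices, and canceling elliptic-hyperbolic pairs, I would drive $\mathcal{F}$ down to the case where $S$ meets $A$ in exactly two points and is crossed by a single band of parallel strands. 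That configuration is precisely the dividing sphere of Figure~\ref{connect-sum}, at which point $K$ has been put into braid connect sum form.

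The main obstacle is the combinatorial case analysis needed to guarantee that every reduction is realized using only braid isotopy and exchange moves, and never forces a genuine Markov (de)stabilization of $K$ that would alter its topological type. In particular one must rule out, or reduce away, the configurations in which two hyperbolic singularities of the same sign obstruct a change of foliation, and show that no inessential circle leaf of $\mathcal{F}$ survives; this is where the essentiality of $S$ (not the primeness of the summands, which plays no role here) is used to force the remaining singularities to pair up correctly. Carrying this bookkeeping through all tile types, and verifying at each stage that the exchange moves preserve the closed-braid structure so that the end result genuinely has the form of Figure~\ref{connect-sum}, is the technical heart of the argument and is exactly the content established by Birman-Menasco in \cite{BM-composite}.
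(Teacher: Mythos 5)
The paper does not prove this statement: it is quoted verbatim from Birman--Menasco \cite{BM-composite}, so there is no internal proof to compare against. Your sketch is an accurate road map of the braid-foliation argument actually used in that reference --- general position of the splitting sphere with respect to the fibration by half-planes $H_\theta$, the elliptic/hyperbolic singularity count, changes of foliation, and elimination of low-valence vertices by braid isotopy and exchange moves until the sphere meets the axis in two points and the knot in two points (note the reduced sphere of Figure~\ref{connect-sum} is crossed by the knot at exactly the two connect-sum points, not by a band of parallel strands) --- and you correctly observe that only essentiality of $S$, not primeness of the summands, is needed and that no Markov (de)stabilization occurs. As you acknowledge, the combinatorial heart of the case analysis is deferred to \cite{BM-composite}, which is precisely what the paper itself does.
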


\begin{definition} 
A {\em transverse  stabilization} of $T$ is a negative braid stabilization, that is, addition of an negative trivial kink about the $z$-axis. We call the inverse operation  {\em transverse destabilization}.
\end{definition} 

Now we are ready to prove our main theorem.

{\em Proof of Theorem~\ref{main-thm} } 
Suppose, on the contrary, that $T_1 \# T_2 \sim T_1' \# T_2'$. Thanks to Orevkov-Shevchishin \cite{OS} and Wrinkle \cite{W}, after a number of positive braid stabilizations we get $S^+(T_1 \# T_2) = S^+(T_1' \# T_2').$ Due to Birman-Wrinkle \cite[Lemma 2]{BW}, one can slide a trivial stabilization loop to any place around the braid by exchange moves and braid isotopy. Therefore,
\begin{equation}\label{exchange-moves}
T_1 \# S^+(T_2)  \ee S^+(T_1 \# T_2) = S^+(T_1' \# T_2') \ee T_1' \# S^+(T_2').
\end{equation}
To simplify notation, since $S^+(T)\sim T$, we will denote $S^+(T_2)$ by $T_2$ and $S^+(T_2')$ by $T_2'$. Let $f:S^3 \to S^3$ be a diffeomorphism corresponding to the composition of the exchange moves and braid isotopy in (\ref{exchange-moves}) so that 
$$f(T_1 \# T_2)=T_1' \# T_2'.$$
We may think that the restriction of $f$ to $T_1$ does not change $\theta$-coordinate i.e., 
\begin{equation}\label{theta}
f|_{T_1}(r,\theta, z)=(r', \theta, z').
\end{equation}

In the following, we will deduce $T_1 \sim T_1'$, which contradicts our assumption.

Let $S \subset S^3$ (resp. $S'$) be a $2$-sphere separating $T_2$ (resp. $T_2'$) and $T_1$ (resp. $T_1'$) as in Figure~\ref{connect-sum}. Let $p, q$ (resp. $p', q'$) denote the intersection points of $S \cap T_1 \# T_2$ (resp. $S' \cap T_1' \# T_2'$). Let $\tilde{T_1}, \tilde{T_2}$ (resp. $\tilde{T_1'}, \tilde{T_2'}$) be two arcs obtained by cutting $T_1 \# T_2$ (resp. $T_1' \# T_2'$) at $p$ and $q$ (resp. $p', q'$). Suppose $\partial T_2=\{-p\} \cup \{q \}$ (resp. $\partial T_2'=\{-p'\} \cup \{q' \}$) with respect to the positive orientation of the braid $T_1 \# T_2$. We have
$$T_1' \# T_2' = \tilde{T_1'}\cup \tilde{T_2'}= f(\tilde{T_1} \cup \tilde{T_2}) = f(\tilde{T_1}) \cup f(\tilde{T_2}).$$   
Let 
$$A= f( \tilde{T_2}) \cap \tilde{T_1'},   \quad B=f( \tilde{T_2}) \cap  \tilde{T_2'},   \quad C=\tilde{T_2'} \setminus B, \quad D=\tilde{T_1'} \setminus A,$$ 
where arcs $A, B, C$ or $D$ may be empty and may have more than one component. 

By small perturbation, $f(S) \cap S'$ consists of a number of disjoint  circles and $f(S) \cup S'$ divides $S^3$ into a number of $3$-balls. Since $T_1' \# T_2'$ intersects $f(S)$ (resp. $S'$) only at two points $f(p), f(q)$ (resp. $p', q'$), some of the balls do not intersect $T_1' \# T_2'$. We deform $f(S)$ to remove such empty balls, starting with the innermost one without moving $T_1' \# T_2'$. See Figure~\ref{modify}. We use the same notation $f(S)$ for the changed $f(S)$. 
\begin{figure}[htpb!]
\begin{center}
\begin{picture}(276, 55)
\put(0,0){\includegraphics{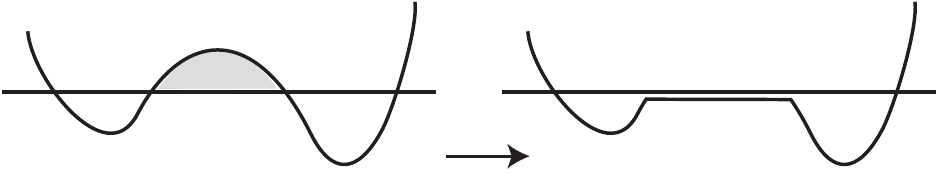}}
\put(-15, 25){$S'$} \put(30, 3){$f(S)$} \put(277, 25){$S'$} \put(178, 3){$f(S)$}
\end{picture}
\caption{Removing the shaded innermost empty $3$-ball.  }\label{modify}
\end{center}
\end{figure}

Eventually we have the following five cases. Note that by (\ref{theta}), we have $\theta_{f(p)}=\theta_p$ and $\theta_{f(q)}=\theta_ q.$ See Figure~\ref{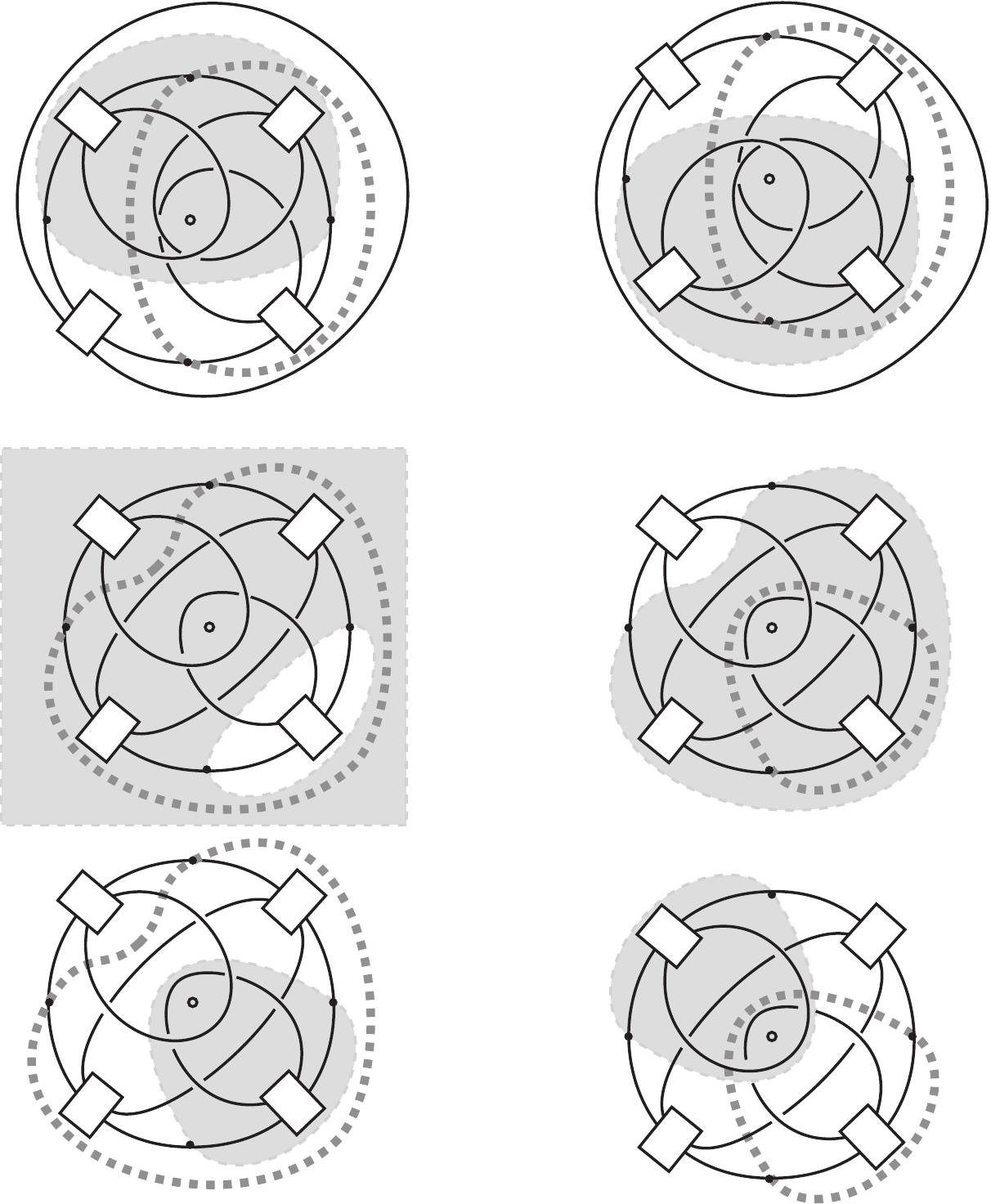}. 
\begin{itemize}
\item $f(S) \cap S'$ consists of one circle and $S^3$ is divided by $f(S) \cup S'$ into four $3$-balls. Cyclic order of $\theta_p, \theta_q, \theta_{p'}, \theta_{q'}$ for each case is;
$$\mbox{Case (1) } \theta_p < \theta_{q'} < \theta_q < \theta_{p'}; \quad  \mbox{Case (1$'$) } \theta_p < \theta_{p'} < \theta_q < \theta_{q'}.$$
\item $f(S)$ and $S'$ are disjoint, and $S^3$ is divided by $f(S) \cup S'$ into two $3$-balls and $S^2 \times (0,1)$.
$$\mbox{Case (2) } \theta_p < \theta_{q'} < \theta_{p'} < \theta_q; \quad \mbox{Case (3) } \theta_p < \theta_{p'} < \theta_{q'} < \theta_q; $$
$$\mbox{Case (4) } \theta_p < \theta_q < \theta_{q'} < \theta_{p'}; \quad \mbox{Case (5) }\theta_p < \theta_q < \theta_{p'} < \theta_{q'}.$$
\end{itemize}
\begin{figure}[htpb!] 
\begin{center}
\begin{picture}(296, 423)
\put(-6,0){\includegraphics{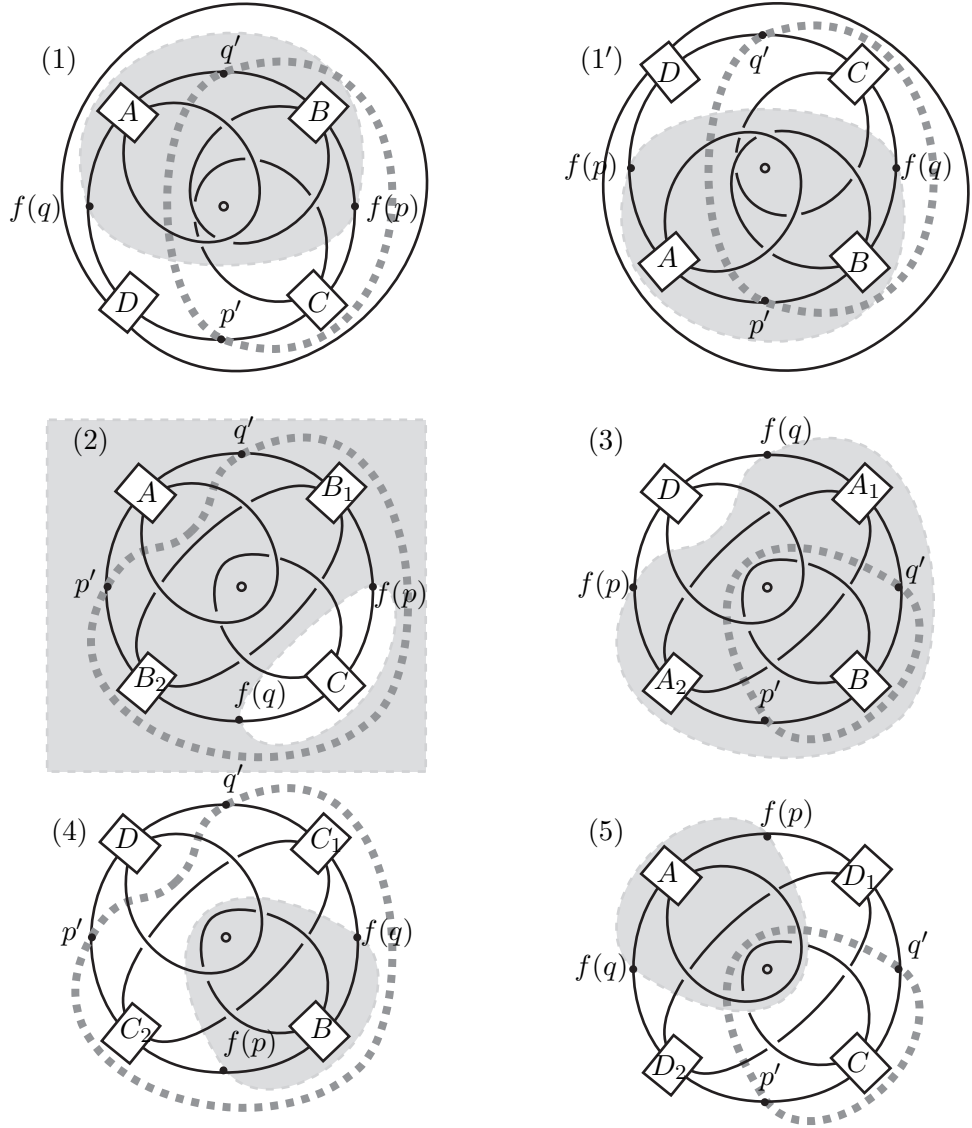}}
\put(21, 380){$A$} \put(93, 380){$B$} \put(93, 308){$C$} \put(20, 308){$D$}  
\put(225, 395){$D$} \put(297, 395){$C$} \put(225, 325){$A$} \put(297, 323){$B$} 
\put(28, 236){$A$} \put(98, 238){$B_1$} \put(27, 166){$B_2$} \put(100, 164){$C$} 
\put(297, 239){$A_1$} \put(224, 165){$A_2$} \put(297, 165){$B$} \put(225, 237){$D$} 
\put(94, 105){$C_1$} \put(22, 33){$C_2$} \put(94, 33){$B$} \put(20, 105){$D$} 
\put(295, 91){$D_1$} \put(223, 19){$D_2$} \put(297, 20){$C$} \put(225, 92){$A$} 
\put(115, 344){$f(p)$} \put(-20, 344){$f(q)$} \put(60, 404){$q'$} \put(60, 304){$p'$}  
\put(190, 360){$f(p)$} \put(260, 300){$p'$} \put(317, 360){$f(q)$} \put(260, 400){$q'$}  
\put(118, 198){$f(p)$} \put(5, 202){$p'$} \put(65, 260){$q'$} \put(65, 160){$f(q)$} 
\put(319, 207){$q'$} \put(195, 202){$f(p)$} \put(264, 260){$f(q)$} \put(264, 160){$p'$} 
\put(113, 70){$f(q)$} \put(0, 70){$p'$} \put(61, 128){$q'$} \put(61, 28){$f(p)$} 
\put(320, 65){$q'$} \put(194, 57){$f(q)$} \put(264, 115){$f(p)$} \put(264, 15){$p'$} 
\put(-8, 400){(1)} \put(196, 400){(1$^\prime$)} \put(4, 256){(2)} \put(199, 256){(3)} \put(-4, 108){(4)} \put(199, 108){(5)}
\end{picture}
\caption{Transverse knot $T_1' \# T_2'$ where $A_1 \cup A_2 = A$, $B_1 \cup B_2 = B$, $C_1 \cup C_2 = C$ and $D_1 \cup D_2 = D$. Sphere $S^\prime=\partial \Delta'$ is dashed. Sphere $f(S)$ and the $3$-ball $f(\Delta)$ containing $f(\tilde{T_2})$ is shaded. Braid strands may be weighted.}\label{1.pdf}
\end{center}
\end{figure}

Let $\ol{A \cup D}$ $($resp. $\ol{B \cup C})$ be a closed braid obtained by filling the braid blocks $B, C$ $($resp. $A, D$) with trivial braid strands of braid index $1$. They are determined uniquely up to braid isotopy.

\begin{claim}\label{claim T_1'} 
We have $\ol{A \cup D}=T_1'$ $($resp. $\ol{B \cup C}=T_2')$. 
\end{claim}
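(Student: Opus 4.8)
The plan is to extract both equalities from the two connect-sum decompositions of $W:=T_1'\# T_2'$ and to reconcile them configuration by configuration. By construction the sphere $S'$ bounds the ball $\Delta'$ with $\tilde{T_2'}=W\cap \Delta'$ and $\tilde{T_1'}=W\setminus \Delta'$, so the definition of the braid connect sum (Figure~\ref{connect-sum}) together with Theorem~\ref{BM-sum} says that capping the two endpoints $p',q'$ of $\tilde{T_1'}$ by a single trivial (braid index $1$) strand running through $\Delta'$ recovers $T_1'$, and capping $\tilde{T_2'}$ the same way recovers $T_2'$. Since $A\cup D=\tilde{T_1'}$ and $B\cup C=\tilde{T_2'}$ as subsets of $W$, proving $\ol{A\cup D}=T_1'$ and $\ol{B\cup C}=T_2'$ therefore reduces to checking that filling the blocks $B,C$ (resp.\ $A,D$) by trivial strands agrees, up to braid isotopy, with these two standard cappings.

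The only thing at issue is that the blocks we fill trivially sweep out the correct angular sectors. First I would use $f(\tilde{T_2})=A\cup B\subset f(\Delta)$ together with the identities $\theta_{f(p)}=\theta_p$ and $\theta_{f(q)}=\theta_q$ from (\ref{theta}) to record, in each of the configurations (1), (1$'$), (2)--(5) of Figure~\ref{1.pdf}, precisely which $\theta$-intervals are occupied by $A,B,C,D$. The desired output is that in every case the combined $\theta$-support of $B\cup C$ is exactly the sector cut off by $S'$ between $\theta_{q'}$ and $\theta_{p'}$, so that replacing $B$ and $C$ there by index-one trivial strands is literally the trivial capping of $\tilde{T_1'}$ across $\Delta'$; the symmetric computation gives $\ol{B\cup C}=T_2'$.

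The main work, and the main obstacle, is the bookkeeping in the cases where the two spheres are linked or where an arc splits. In cases (1) and (1$'$) the circle $f(S)\cap S'$ forces $A,B,C,D$ to interleave, while in cases (2)--(5) one of the arcs breaks into two pieces (for instance $B=B_1\cup B_2$), so I must verify that the two trivial strands inserted on the two sub-sectors reassemble, after closing up, into a single unknotted and unbraided strand, introducing no new crossings and no extra linking with the $z$-axis. Here the key is that $f(\tilde{T_2})$ lies entirely inside the ball $f(\Delta)$: this lets me push the sub-arcs of $B$ (and of $A$) across $f(S)$ without passing through the axis, collapsing each configuration back to the standard capping. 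Once all of (1)--(5) have been verified, both equalities follow.
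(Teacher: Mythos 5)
Your proposal is correct and follows the same route as the paper, which disposes of this claim in one line as immediate from Definition~\ref{connect sum def}: since $A\cup D=\tilde{T_1'}$ and $B\cup C=\tilde{T_2'}$ and $S'$ is the standard dividing sphere of Figure~\ref{connect-sum} (pierced exactly twice by the axis), filling the blocks lying inside $\Delta'$ by an index-one trivial strand is literally the capping that defines $T_1'$ (and symmetrically for $T_2'$). The case-by-case bookkeeping over configurations (1)--(5) that you anticipate as the ``main work'' is not actually needed for this claim, because $B\cup C=\tilde{T_2'}$ is a single arc in $\Delta'$ no matter how $f(S)$ sits; that kind of analysis only becomes relevant for the companion Claim~\ref{claim T_1} about $\ol{C\cup D}$ and $\ol{A\cup B}$, where the relevant sphere $f(S)$ need not be in standard position.
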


\begin{proof}
This is clear from Definition~\ref{connect sum def} of the connect sum.
\end{proof}

Let $\ol{C\cup D}$ $($resp. $\ol{A\cup B})$ be a closed braid obtained by filling the braid blocks $A, B$ $($resp. $C, D$) with trivial braid strands of braid index $1$. Note that $\ol{C \cup D}$ $($resp. $\ol{A\cup B})$ is unique up to exchange moves, since after an exchange move $f$ the sphere $f(S)$ is pierced by the braid axes more than twice in general and there may be several ways to take the braid closure. 

\begin{claim}\label{claim T_1} 
We can extend $f(\tilde{T_1})=C\cup D$ $($resp. $f(\tilde{T_2})=A\cup B)$ to the closed braid $\ol{C\cup D}$ $($resp. $\ol{A\cup B})$ and the extension satisfies $\ol{C\cup D}\ee T_1$ $($resp. $\ol{A\cup B} \ee T_2)$.
\end{claim}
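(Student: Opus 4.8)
The plan is to establish Claim~\ref{claim T_1} by reversing the construction of the connect sum: the knot $T_1$ was cut along the sphere $S$ into the arc $\tilde{T_1}$, and after applying $f$ this arc becomes $C \cup D$; so I want to show that capping off $C \cup D$ by trivial strands through $f(S)$ recovers $T_1$ up to exchange equivalence. The core observation is that $f$ is built out of braid isotopies and exchange moves (by (\ref{exchange-moves})), and by (\ref{theta}) we have arranged $f|_{T_1}$ to preserve the $\theta$-coordinate, so on $\tilde{T_1}$ the map $f$ does essentially nothing to the angular braiding structure. The work is to check that the \emph{capping} operation behaves well with respect to the geometry of $f(S)$, which in general is pierced by the braid axis more than twice.

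First I would fix one of the five cases (say, one of the cases in Figure~\ref{1.pdf}) and describe explicitly how $f(\tilde{T_1}) = C \cup D$ sits relative to the modified sphere $f(S)$ and the axis, recording the cyclic order of $\theta_p, \theta_q, \theta_{p'}, \theta_{q'}$ listed in the case analysis. Then I would form $\ol{C \cup D}$ by inserting trivial braid-index-$1$ strands into the blocks $A$ and $B$; geometrically this fills in the portion of the braid that $f(\tilde{T_2})$ used to occupy with an unknotted, unbraided strand running parallel to the axis. The point to verify is that this filled-in closed braid is exactly $f$ applied to the analogous capping of $\tilde{T_1}$ that produces $T_1$: since $f(\tilde{T_1})$ and $\tilde{T_1}$ agree in $\theta$-coordinate and $f$ is a composition of exchange moves and braid isotopies, the two cappings differ only by exchange moves, giving $\ol{C \cup D} \ee T_1$. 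The argument for $\ol{A \cup B} \ee T_2$ is symmetric, swapping the roles of the two summands.

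The step I expect to be the main obstacle is the non-uniqueness of the braid closure noted just before the claim: because $f(S)$ may be pierced by the axis more than twice, there are several inequivalent ways to insert the trivial strands and take the closure, and I must show that the specific extension dictated by the geometry of each of the five cases yields a closed braid that is genuinely exchange-equivalent to $T_1$, not merely topologically isotopic. This is exactly where Birman--Menasco's composite braid theorem (Theorem~\ref{BM-sum}) and the Birman--Wrinkle loop-sliding lemma do the heavy lifting: they guarantee that different admissible cappings across a sphere pierced multiply by the axis are related by exchange moves and braid isotopy, so the ambiguity collapses to the equivalence relation $\ee$. I would therefore organize the proof so that, in each case, the trivial-strand filling is chosen compatibly with the innermost-ball reduction already performed on $f(S)$, and then invoke these two results to absorb the remaining closure ambiguity into exchange moves.
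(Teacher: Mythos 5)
Your overall target is right, and you correctly identify the closure ambiguity as an issue, but the central step of your plan does not hold up, and the tools you invoke to repair it are not the ones that do the work. You propose to verify that the filled-in closed braid $\ol{C\cup D}$ is ``exactly $f$ applied to the analogous capping of $\tilde{T_1}$,'' and to conclude $\ol{C\cup D}\ee T_1$ because $f$ preserves the $\theta$-coordinate on $T_1$ and is built from exchange moves. Neither half of this works as stated. First, if $\alpha\subset\Delta$ is the joining arc with $T_1=\tilde{T_1}\cup\alpha$, then $f(T_1)=f(\tilde{T_1})\cup f(\alpha)$, and $f(\alpha)$ is in general not a trivial braid-index-$1$ strand: the exchange moves making up $f$ act on the whole composite braid and can drag anything inside the ball $\Delta$ around in an uncontrolled way, so $f(T_1)$ is not the capping $\ol{C\cup D}$. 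Second, even granting that, an exchange move of the composite braid $T_1\# T_2$ does not automatically restrict to an exchange move of the summand; equation (\ref{theta}) only pins down the $\theta$-coordinates of points of $T_1$, not the braiding, which is exactly what exchange moves alter. Finally, Theorem~\ref{BM-sum} and the Birman--Wrinkle loop-sliding lemma do not assert that ``different admissible cappings across a sphere pierced multiply by the axis are related by exchange moves''; they concern reduction of composite braids to split form and the mobility of stabilization loops, respectively, and neither supplies the comparison between $\ol{C\cup D}$ and $T_1$.

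What is actually needed, and what the paper's proof does, is an induction over the individual exchange moves $f=f_k\circ\cdots\circ f_0$. One first arranges by braid isotopy respecting (\ref{theta}) that each $f_i$ fixes $p$ and $q$, so that each exchange arc lies entirely in $\tilde{T_1}$ or entirely in $\tilde{T_2}$. Then, using Birman--Menasco's description of how an exchange move interacts with the splitting sphere (the half-plane foliation $S\cap H_\theta$, essential versus inessential arcs, and the three-step normal form: deform $S$ to make room, move the braid, remove inessential arcs), one specifies at each step how the capping arc $\alpha$ is to be moved inside $\Delta$ so that it stays a trivial strand, and checks that $\tilde{T_1}\cup\alpha$ changes by an exchange move when $f_i$ involves subarcs of $\tilde{T_1}$ and not at all otherwise. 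This step-by-step tracking of $\alpha$ is the substance of the claim and is the piece your proposal is missing. Your organization around the five configurations of Figure~\ref{1.pdf} is also misplaced for this particular claim: those cases matter only for the subsequent analysis of $\ol{A},\ol{B},\ol{C},\ol{D}$ in the main proof, not for establishing $\ol{C\cup D}\ee T_1$.
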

%
\begin{proof}
Let $\Delta \subset S^3 \setminus S$ be the $3$-ball containing $\tilde{T_2}$. Join the end points $p, q$ of $\tilde{T_1}$ by an arc $\alpha \subset \Delta$ so that $T_1 = \tilde{T_1} \cup \alpha$. 

Suppose $f=f_k \circ \cdots \circ f_0$ where $f_i$ ($i=0,1,\cdots, k$) is an exchange move. We may assume, if necessary, as in Figure~\ref{ex-move-2} by using some braid isotopy with property (\ref{theta}), that $p, q$ are fixed by $f_i$. That is, each of the two exchange arcs is a sub-arc of either $\tilde{T_1}$ or $\tilde{T_2}$.
\begin{figure}[htpb!]
\begin{center}
\begin{picture}(265, 60)
\put(0,0){\includegraphics{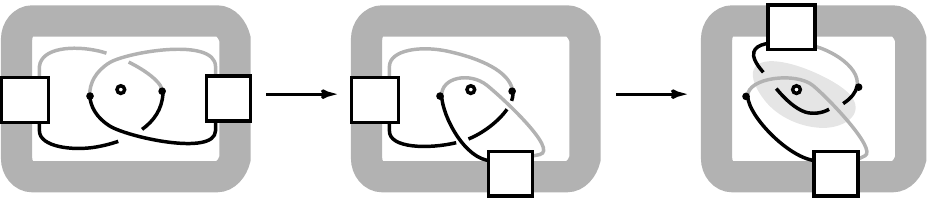}}
\put(5, 25){$b$} \put(62, 25){$b'$} \put(105, 25){$b$} \put(143, 3){$b'$} \put(238, 3){$b'$} \put(226, 45){$b$} \put(19, 26){$p$} \put(48, 28){$q$} \put(120, 26){$p$} \put(150, 28){$q$}  \put(210, 20){$p$} \put(250, 28){$q$}
\end{picture}
\caption{By braid isotopy, points $p$ and $q$ can be outside the shaded exchange domain. Arc $\tilde{T_1}$ is colored gray and $\tilde{T_2}$ is black.}
\label{ex-move-2}
\end{center}
\end{figure}

Here we recall some of Birman-Menasco's foundational work in \cite{BM-composite}. Let $H_{\theta_0} \subset \RR^3$ be the half-plane $\{ (r, \theta_0, z) | 0< r, z \in \RR \}$. For all but a finite number of $\theta \in [0, 2\pi)$ the intersection $S \cap H_\theta$ is a disjoint union of simple closed curves and properly embedded arcs, in which case $H_\theta$ is called {\em non-singular}. Thanks to \cite[Lemma 1 and p.135]{BM-composite} we may assume that there are no simple closed curves. When $H_\theta$ is non-singular, we call an arc $\beta \subset S \cap H_\theta$ {\em essential} if the both components of $H_\theta$ split along $\beta$ are pierced by our transverse knot.

An exchange move of a composite braid with separating sphere $S$ is done by three steps. See Figure~\ref{room}. First,  without moving the braid we change the shape of $S$ to make a ``room'' for the coming exchange move, which can be done in the exchange domain (the shaded $3$-ball in the right sketch of Figure~\ref{ex-move-2}) away from $p$ and $q$. Second, we move the braid by fixing $S$. Third, move $S$ by isotopy in order to remove all the inessential arcs from the inner-most one (as in \cite[p.120]{BM-composite}) if they occur in the above procedure. 
\begin{figure}[htpb!]
\begin{center}
\begin{picture}(333, 191)
\put(0,0){\includegraphics{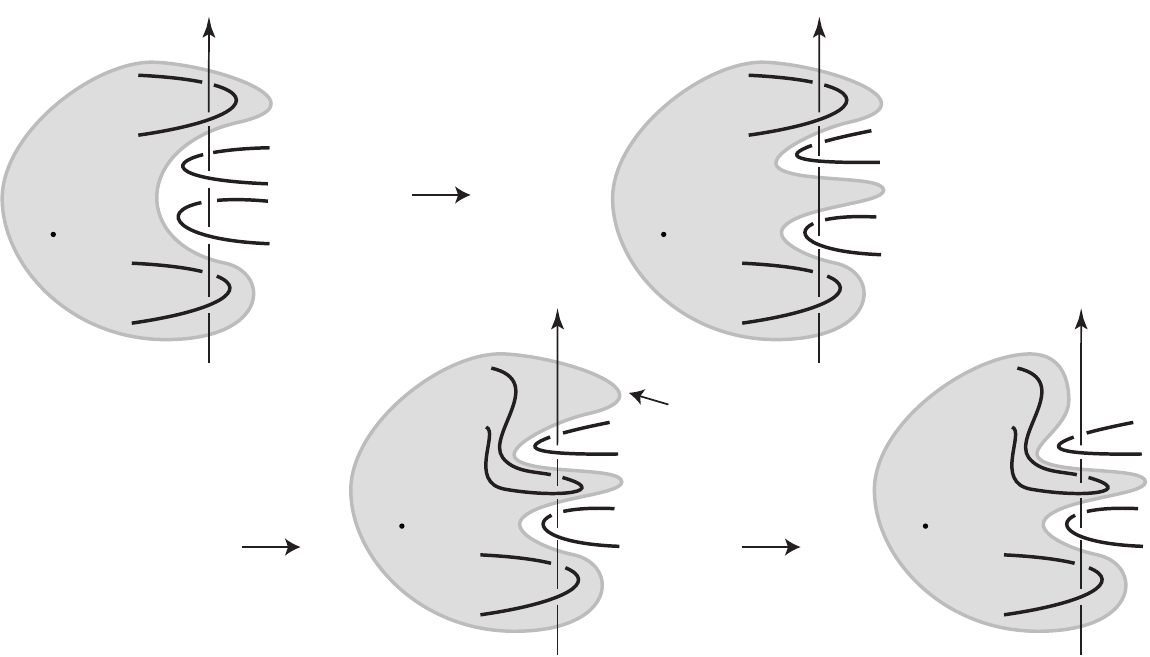}}
\put(120, 140){(1)} \put(195, 69){{\scriptsize inessential}}  \put(195, 60){{\scriptsize arc}} \put(70, 36){(2)} \put(216, 36){(3)} \put(16, 125){$q$} \put(192, 125){$q$} \put(120, 36){$q$} \put(270, 36){$q$} 
\end{picture}
\caption{The three steps of an exchange move. The $3$-ball $\Delta$ is shaded. Points $p$ (not in the sketch) and $q$ are fixed.}
\label{room}
\end{center}
\end{figure}

Based on this, for each exchange move $f_i$ we define how the joining arc $\alpha$ changes. First, change $S$ as $f_i$ does by fixing $\tilde{T_1}$ but moving $\alpha$ in the $3$-ball $\Delta$ by some exchange move if necessary, and $p, q \in S$ are fixed. Second, (a) if an exchange move $f_i$ involves sub-arcs of $\tilde{T_1}$ then change $\tilde{T_1} \cup \alpha$ to $f_i(\tilde{T_1}) \cup \alpha,$  (b) otherwise, do not change $\tilde{T_1} \cup \alpha$ at all. Thus, $\tilde{T_1} \cup \alpha$ and its result after the changes are related to each other by an exchange move up to braid isotopy. Third, remove the inessential arcs as $f_i$ does. Since the arc $\alpha$ is of braid index $=1$, even after the third step there may exist inessential arcs. 

Repeating this construction for each $f_i$, we obtain a closed braid $\ol{f(\tilde{T_1})}$ extending $f(\tilde{T_1})=C \cup D$ and $\ol{f(\tilde{T_1})}\ee T_1$.

Furthermore, $\ol{f(\tilde{T_1})}$ is exchange equivalent to the closed braid $A \cup B \cup C \cup D$ filling the braid blocks $A, B$ (or $A_i, B_j,$ $i, j=1,2$) by trivial arcs of braid index $=1$, i.e., $\ol{f(\tilde{T_1})}\ee \ol{C \cup D}$. 
\end{proof}

We continue the proof of Theorem~\ref{main-thm}.

Construct closure $\ol{A}$ by filling the braid boxes $B, C, D$ by trivial braid arcs of braid index $=1$. Closure $\ol{A}$ is unique up to exchange move. Similarly, construct closures $\ol{B}$, $\ol{C}$, $\ol{D}$. By this construction and Theorem~\ref{BM-sum}, in all the five cases we have $T_1' \# T_2' = A \cup B \cup C \cup D = \ol{A} \# \ol{B} \# \ol{C} \# \ol{D}$.

By Claims~\ref{claim T_1'}, \ref{claim T_1} and Theorem~\ref{BM-sum} we have
\begin{eqnarray*}
T_1 &\ee &  \ol{C \cup D} \ee \ol{C} \# \ol{D}, \\
T_2 & \ee &  \ol{A \cup B} \ee \ol{A} \# \ol{B}, \\
T_1' & = &  \ol{A \cup D} \ee \ol{A} \# \ol{D}, \\
T_2' & = &  \ol{B \cup C} \ee \ol{B} \# \ol{C}. 
\end{eqnarray*}

(Case 1) Recall $K_2$ is the topological type of $T_2 \sim T_2'$ and it is a prime knot. Since $T_2 \ee \ol{A} \# \ol{B}$ and $T_2' \ee \ol{B} \# \ol{C}$  we have two cases to study.

(Case 1.1)
Suppose that topologically $\ol{B}$ is $K_2$ and $\ol{A}, \ol{C}$ are the unknot. Since $T_2$ and $T_2'$ are not transversely destabilizable and the unknot is exchange reducible \cite[Theorem 1]{BM-unknot}, it follows that $\ol{A}, \ol{C}$ are transversely isotopic to the $1$-strand braid representative of the unknot. Thus 
$$T_1 \ee \ol{C} \# \ol{D} \sim \ol{D} \sim \ol{A} \# \ol{D} \ee T_1'.$$

(Case 1.2) Suppose that topologically $\ol{B}$ is the unknot and $\ol{A}, \ol{C}$ are $K_2$. Since $T_2 \sim T_2'$ cannot be transversely destabilized, $\ol{B}$ is transversely isotopic to the $1$-strand braid and $\ol{A} \sim \ol{A} \# \ol{B} \ee T_2 \sim T_2' \ee \ol{B} \# \ol{C} \sim \ol{C}$. Thus
$$T_1 \ee \ol{C} \# \ol{D} \sim \ol{A} \# \ol{D} \ee T_1'.$$

Similar arguments hold for (Case 1$'$).

(Case 2) Since $K_2$ is a prime knot, we have two cases to study. 

(Case 2.1) Suppose that topologically $\ol{B}$ is $K_2$ and $\ol{A}, \ol{C}$ are the unknot. Since $T_2$  cannot be transversely destabilized, $\ol{A}, \ol{C}$ are transversely isotopic to the $1$-strand braid. Thus 
$$T_1 \ee \ol{C} \sim \ol{A} = T_1'.$$

(Case 2.2) Suppose that topologically $\ol{B}$ is the unknot and $\ol{C}$ is $K_2$. Since $T_2'\ee \ol{B} \# \ol{C}$  cannot be transversely destabilized, $\ol{B}$ is transversely isotopic to the $1$-strand braid. Therefore,
$$T_1 \ee \ol{C} \sim \ol{C} \# \ol{B} \ee T_2' \sim T_2 \ee \ol{A} \# \ol{B} \sim \ol{A} = T_1'.$$

(Case 3) Since $\ol{A} \# \ol{B} \ee T_2 \sim T_2' = \ol{B}$  cannot be transversely destabilized, $\ol{A}$ is transversely isotopic to the $1$-strand braid. Therefore, 
$$T_1 \ee \ol{D} \sim \ol{D} \# \ol{A} \ee T_1'.$$

(Case 4) Since $\ol{B} \# \ol{C} \ee T_2' \sim T_2 \ee \ol{B}$  cannot be transversely destabilized, $\ol{C}$ is transversely isotopic to the $1$-strand braid. Therefore,
$$T_1 \ee \ol{C} \# \ol{D} \sim \ol{D} = T_1'.$$

(Case 5) We have $\ol{A} \ee T_2 \sim T_2' = \ol{C}$ thus $T_1 \ee \ol{C} \# \ol{D} \ee \ol{A} \# \ol{D} \ee T_1'.$

In all the cases, we obtain $T_1 \sim T_1'$ which contradicts our assumption that $T_1 \nsim T_1'$.
\hfill $\Box$

\section{Appendix}\label{sec3} 

In this section, we give an alternative proof of Theorem~\ref{main-thm} as a corollary of Etnyre-Honda's classification of connected sum Legendrian knots \cite[Theorem 3.4]{EH}. Since our ambient manifold is $S^3$ we use its $\RR^3$-version taken from \cite{E}.

Let $\K \subset \RR^3$ be a topological knot type and $\LL(\K)$ be the set of Legendrian representatives of $\K$. We denote by $\s_\pm(L)$ the $\pm$-stabilization of the Legendrian knot $L$.
\begin{theorem}\label{EH-thm}
\cite[Theorem 3.4]{EH}  \cite[Theorem 5.11]{E}
Let $\K = \K_1 \# \cdots \# \K_n$ be a topological connected sum knot type in $\RR^3$. The map 
$$\frac{\LL(\K_1) \times \dots \times \LL(\K_n)}{\approx} \to \LL(\K_1 \# \cdots \# \K_n)$$
is a bijection where the equivalence relation $\approx$ is generated by
\begin{enumerate}
\item $( \dots, \s_{\pm}(L_i), \dots , L_j, \dots ) \approx ( \dots, L_i,  \dots , \s_{\pm}(L_j), \dots )$
\item $(L_1, \dots, L_n) \approx (L_{\sigma(1)}, \dots, L_{\sigma(n)})$  where $\sigma$ is a permutation of $1, \dots , n$ such that $\K_i = \K_{\sigma(i)}. $
\end{enumerate}
\end{theorem}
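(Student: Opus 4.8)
The plan is to make the connected sum operation geometric by means of decomposing spheres and then to control those spheres using Giroux's convex surface theory; throughout I write $\simeq$ for Legendrian isotopy. First I would fix, for each topological factor $\K_i$, a standard Legendrian unknotted arc inside a Darboux ball and define the connected sum of an ordered tuple $(L_1,\dots,L_n)$ by cutting each $L_i$ along this arc and gluing the resulting one-string tangles cyclically. The well-definedness of the map is the easy half. Relation (2) is immediate, since the contact connected sum is associative and commutative up to Legendrian isotopy, so reordering factors of equal topological type produces Legendrian isotopic knots. Relation (1) follows by placing the extra zig-zag of a stabilization inside the gluing neck, where the connecting arc is a trivial Legendrian arc belonging to neither summand; hence $\s_\pm(L_i)\#L_j\simeq L_i\#\s_\pm(L_j)$.

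For surjectivity I would take an arbitrary Legendrian representative $L$ of $\K_1\#\cdots\#\K_n$ and use the topological connected sum structure to fix decomposing spheres $S_1,\dots,S_{n-1}$, each meeting $L$ transversally in two points. After a $C^\infty$-small perturbation (Giroux) every $S_k$ is convex, and since $S^2$ sits inside the tight $(S^3,\xi_{std})$ its dividing set is a single circle. Cutting along the $S_k$ produces balls whose convex boundary has connected dividing set, so by Eliashberg's uniqueness of the tight three-ball each piece is standard; capping off the Legendrian arc in each ball with the model arc recovers a Legendrian knot $L_i$ of type $\K_i$, and by construction $L\simeq L_1\#\cdots\#L_n$.

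For injectivity, suppose two tuples induce Legendrian isotopic connected sums. By uniqueness of the prime (topological) decomposition the two families of decomposing spheres are topologically parallel after an ambient isotopy, and the remaining task is to promote this to contact data. I would make both families convex and compare them using the classification of tight contact structures on the complementary pieces (balls carrying one-string tangles of type $\K_i$ with convex boundary), showing by a bypass analysis that any two convex decomposing spheres are related by a sequence of bypass attachments along the neck. Since attaching such a bypass to a Legendrian knot realizes a $\pm$-stabilization, the only ambiguity that survives is which summand absorbs each stabilizing bypass---precisely the exchange move (1)---together with the reordering freedom (2). Hence the two inducing tuples are $\approx$-equivalent.

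The main obstacle is this injectivity step, namely controlling all convex decomposing spheres simultaneously. The Legendrian isotopy realizing $L_1\#\cdots\#L_n\simeq L_1'\#\cdots\#L_n'$ need not carry one family of spheres onto the other, so one must establish independently a uniqueness statement for convex spheres realizing the topological decomposition. This rests on Honda's classification of tight contact structures on solid tori and on the summand complements, together with a bypass-rotation argument verifying that the residual freedom is exactly the two generating moves of $\approx$ and nothing more.
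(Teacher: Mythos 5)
This statement is not proved in the paper at all: it is quoted verbatim as Etnyre--Honda's classification of Legendrian knots in connected sums (\cite[Theorem 3.4]{EH}, see also \cite[Theorem 5.11]{E}), and the second proof of Theorem~\ref{main-thm} uses it as a black box. So the only meaningful comparison is between your sketch and the original Etnyre--Honda argument, whose overall strategy you have correctly identified: define the sum via a standard neck, get well-definedness by sliding zig-zags through the neck, and get surjectivity by perturbing a decomposing sphere to be convex (Giroux), using tightness of $(S^3,\xi_{std})$ to force a connected dividing set, and invoking Eliashberg's uniqueness of the tight $3$-ball to standardize the complementary pieces. That half of your outline is sound.

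The genuine gap is exactly where you flag it, and flagging it does not fill it: injectivity. The assertions that ``any two convex decomposing spheres are related by a sequence of bypass attachments along the neck'' and that ``attaching such a bypass to a Legendrian knot realizes a $\pm$-stabilization'' together constitute essentially the entire content of the theorem, and as written the second one conflates two different operations --- bypasses are attached to convex surfaces, not to Legendrian knots, and the effect on the two Legendrian tangles of pushing the decomposing sphere across a kink of $L$ is precisely what must be computed, not assumed. One must also confront the fact that a Legendrian isotopy from $L_1\#\cdots\#L_n$ to $L_1'\#\cdots\#L_n'$ need not carry one family of decomposing spheres to the other, so the comparison has to be run through an isotopy discretization, tracking the sphere across each elementary move and showing that the induced change on the tuple of summands is generated by relations (1) and (2) \emph{and nothing else}. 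Proving the ``nothing else'' is where Etnyre--Honda spend their effort (and where primeness of the $\K_i$ and the classification of tight structures on the complementary pieces actually enter). Your proposal names the tools that should accomplish this but does not carry out the argument, so as it stands it is an annotated outline of the known proof rather than a proof.
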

We also recall a theorem by Epstein-Fuchs-Meyer \cite{EFM}: Let $L \subset (S^3, \xi_{std})$ be a Legendrian knot and $T_{\pm}(L)$ be its positive and negative transverse push offs. 
\begin{theorem}\label{thm-EFM}
\cite[Theorem 2.1]{EFM}
Legendrian knots $L_1, L_2 \subset (S^3, \xi_{std})$ are negatively stably isotopic $($i.e., $\s_-^k(L_1) = \s_-^l(L_2)$ for some $k, l \geq 0)$ if and only if $T_{\pm}(L_1) \sim T_{\pm}(L_2)$.
\end{theorem}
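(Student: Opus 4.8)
The plan is to reduce the statement to a single geometric lemma relating one elementary Legendrian move to the transverse pushoff, and then to bootstrap this to arbitrary isotopies. The lemma I would isolate first is that a negative stabilization does not change the positive transverse pushoff up to transverse isotopy, i.e. $T_+(\s_-(L)) \sim T_+(L)$ for every Legendrian knot $L$. This is checked directly in the front projection: negative stabilization inserts a downward zig-zag (a pair of cusps) into the front of $L$, and pushing the stabilized knot off transversally yields a transverse knot that can be isotoped across the new cusp region back to $T_+(L)$; on the level of invariants this is consistent because $\s_-$ sends $(tb, rot)$ to $(tb-1, rot-1)$, leaving the self-linking number $tb - rot$ unchanged. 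Granting this lemma, the ``only if'' direction is immediate: if $\s_-^k(L_1) = \s_-^l(L_2)$ as Legendrian knots, then applying the lemma repeatedly gives $T_+(L_1) \sim T_+(\s_-^k(L_1)) \sim T_+(\s_-^l(L_2)) \sim T_+(L_2)$, and the statement for the negative pushoff follows by the mirror symmetry exchanging the roles of $\s_+$ and $\s_-$.

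The ``if'' direction is the substantive one, and here I would argue by lifting a transverse isotopy to the Legendrian category. The key input is that every transverse knot arises as a transverse pushoff of some Legendrian knot (its Legendrian approximation), and that two Legendrian approximations of the same transverse knot differ by negative stabilizations. Starting from a transverse isotopy $T_+(L_1) \sim T_+(L_2)$, I would put it in generic form as a one-parameter family of transverse knots and view it through its front projections, so that the family is constant except at finitely many elementary transverse moves. For each such elementary move I would track a Legendrian approximation of the moving transverse knot and verify that the corresponding change of the approximating Legendrian knot is realized by a Legendrian isotopy together with the insertion or deletion of negative zig-zags, i.e. by a negative stabilization or its inverse. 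Composing these local models along the whole isotopy shows that suitable negative stabilizations of $L_1$ and of $L_2$ become Legendrian isotopic, which is exactly negative stable isotopy.

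The main obstacle I anticipate is the last matching step: controlling the Legendrian approximation \emph{coherently} across the entire family rather than at a single instant. At an isolated generic moment the transverse move has a clean local model, but one must ensure the approximations chosen before and after a move are compatible and that no uncontrolled Legendrian isotopy type is introduced; this is where the genericity of the transverse isotopy and a careful bookkeeping of the cusps produced by stabilization are essential. A secondary point to handle carefully is that the pushoff map is genuinely many-to-one, so one must prove that the \emph{only} ambiguity in the lift is negative stabilization --- equivalently, that the fiber of $L \mapsto T_+(L)$ over a fixed transverse isotopy class is a single $\s_-$-stabilization class --- which is precisely what the local models above are designed to establish. Once the local-to-global argument is assembled, both implications combine to give the stated equivalence.
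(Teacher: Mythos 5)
First, note that the paper does not prove this statement at all: it is quoted verbatim (as Theorem~2.1 of Epstein--Fuchs--Meyer \cite{EFM}) and used as a black box, so there is no internal proof to compare against; the relevant comparison is with the argument in \cite{EFM} itself, whose strategy --- Legendrian approximation in the front projection, with negative zig-zags as the only ambiguity of the lift --- is exactly the strategy you outline. Your ``only if'' direction is fine: the lemma $T_+(\s_-(L))\sim T_+(L)$ is standard and your front-projection justification is correct. One caveat about the statement itself: the ``$\pm$'' cannot be read as ``both pushoffs simultaneously,'' since $\s_-$ sends $(tb,r)$ to $(tb-1,r-1)$ and hence changes $sl(T_-(L))=tb(L)+r(L)$ by $-2$ while preserving $sl(T_+(L))=tb(L)-r(L)$; your mirror-symmetry reading (negative stabilization pairs with $T_+$, positive with $T_-$) is the correct interpretation, but it is worth saying explicitly that the literal ``both at once'' reading of the displayed statement is false.

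The genuine gap is in the ``if'' direction, which is where the entire content of the theorem lives. You correctly identify the two facts needed --- (a) every transverse knot is a pushoff of some Legendrian approximation, and (b) any two Legendrian approximations of transversely isotopic knots become Legendrian isotopic after negative stabilizations --- but (b) is precisely the theorem being proved, and your proposal never supplies the local models that would establish it. Saying that the fiber of $L\mapsto T_+(L)$ over a transverse isotopy class ``is precisely what the local models above are designed to establish'' is circular as written: no local model is actually exhibited, no list of elementary transverse moves is given, and no argument is made that the approximating front can be chosen so that crossing each such move inserts or deletes only \emph{downward} zig-zags (rather than upward ones, which would be positive stabilizations and would break the conclusion). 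This sign control is the delicate point in \cite{EFM} --- it is where the asymmetry between $\s_+$ and $\s_-$ enters --- and it cannot be waved through by genericity alone. As it stands the proposal is a correct plan with the hard step deferred, not a proof.
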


The next proposition explains the relationship between positive Legendrian stabilization and transverse stabilization: Let $S(T)$ be a transverse stabilization of $T$. Under the identification of $T$ with a closed braid, $S(T)$ is an negative braid stabilization of $T$.
\begin{prop}\label{prop}
Let $L \subset (S^3, \xi_{std})$ be a Legendrian knot. We have $T_+(\s_+(L))\sim S(T_+(L))$.
\end{prop}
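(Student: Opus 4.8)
The plan is to prove the identity $T_+(\mathcal{S}_+(L)) \sim S(T_+(L))$ by working with explicit local models near the point where the stabilization is performed, and checking that both operations produce the same transverse knot up to transverse isotopy. The two sides live in different formalisms—the left side uses the Legendrian front/Lagrangian projection picture, while the right side uses the braid picture—so the first task is to fix a dictionary between them. I would invoke the standard correspondence (underlying Bennequin's identification, already used throughout Section~\ref{sec2}) between transverse knots in $(S^3,\xi_{std})$ and closed braids about the $z$-axis, and the standard fact that the positive transverse pushoff $T_+(L)$ of a Legendrian knot $L$ can be realized as a closed braid whose local structure is dictated by the front projection of $L$.

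First I would recall the local model for a positive Legendrian stabilization $\mathcal{S}_+(L)$: in the front projection, $\mathcal{S}_+$ adds a downward zig-zag (a pair of cusps) to the front, decreasing the Thurston--Bennequin number by $1$ and changing the rotation number by $\pm 1$ according to sign. Next I would compute the effect of this local modification on the transverse pushoff $T_+$. The key computation is that the transverse pushoff of the added zig-zag, read as a braid, is precisely a negative kink about the $z$-axis—that is, a negative braid stabilization. This matches the definition recalled just before the proposition, namely that $S(T)$ is a negative braid stabilization of $T$. So the heart of the proof is a \emph{local} verification: one examines the transverse pushoff in a neighborhood of the new cusps and shows directly, via the front-to-braid dictionary, that it introduces exactly one negative crossing of the kind produced by a negative braid stabilization.

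The argument then assembles as follows. Away from the stabilization region, $T_+(\mathcal{S}_+(L))$ and $T_+(L)$ agree (both are the transverse pushoff of the same front), so the only difference is localized. Inside the stabilization region, the local model computation identifies the extra piece of $T_+(\mathcal{S}_+(L))$ with a negative kink, i.e.\ with the local modification defining $S$. Since both the ambient agreement and the local identification are realized by transverse isotopies, gluing them yields $T_+(\mathcal{S}_+(L)) \sim S(T_+(L))$.

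\textbf{The main obstacle} I anticipate is the bookkeeping of signs and orientations in the local model: one must be careful that the \emph{positive} Legendrian stabilization corresponds to a \emph{negative} braid stabilization (which is the content making this proposition the right bridge to Theorem~\ref{thm-EFM}), rather than the naive sign-matching one might first guess. This sign reversal is exactly what reconciles the $\mathcal{S}_+$ on the Legendrian side with the negative kink defining transverse stabilization $S$, and getting it right requires tracking the contact framing conventions for $T_+$ together with the braid-axis orientation. Once the local picture is drawn carefully—ideally with a figure showing the front zig-zag, its transverse pushoff, and the resulting negative braid kink side by side—the remaining steps are routine transverse isotopies, and the global statement follows by the localization argument above.
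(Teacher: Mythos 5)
The paper does not actually prove this proposition: it is stated without proof, as a standard fact (it appears, for instance, in Etnyre's survey \cite{E} and is implicit in Epstein--Fuchs--Meyer \cite{EFM}), with only the preceding sentence recalling that $S(T)$ means a negative braid stabilization. Your plan is the standard argument one would write down, and it is correct in outline: positive Legendrian stabilization is a local modification of the front, the positive transverse pushoff is unchanged outside that region, and the local computation shows the pushoff of the added zig-zag is a transverse stabilization, which under Bennequin's braiding becomes a negative braid kink, i.e.\ exactly the operation $S$ as defined in the paper. Your sanity check via the sign bookkeeping is the right one to make explicit: with $sl(T_+(L)) = tb(L) - r(L)$ and $\s_+$ lowering $tb$ by $1$ and raising $r$ by $1$, one gets $sl(T_+(\s_+(L))) = sl(T_+(L)) - 2$, consistent with one transverse stabilization, whereas $\s_-$ leaves $T_+$ unchanged up to transverse isotopy --- so the ``positive Legendrian $\leftrightarrow$ negative braid'' reversal you flag is genuine and convention-dependent. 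The only presentational caution is that your phrase ``the transverse pushoff of the added zig-zag, read as a braid, is precisely a negative kink about the $z$-axis'' compresses two distinct steps: first the local front-to-transverse computation producing a small loop of self-linking defect $-2$ (a transverse stabilization in the intrinsic sense), and then the identification of that loop with a negative braid stabilization after braiding the knot about the axis; keeping these separate avoids any ambiguity about where the braid axis sits relative to the stabilization region.
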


Here is an alternative proof of Theorem~\ref{main-thm}.
\begin{proof}
Suppose that $T_1 \# T_2 \sim T_1' \# T_2'$. Let $L_1 \# L_2$ (resp. $L_1' \# L_2'$) be a Legendrian push-off of $T_1 \# T_2$ (resp. $T_1' \# T_2'$) so that
$$T_+(L_1 \# L_2) \sim T_1 \# T_2 \sim T_1' \# T_2' \sim T_+(L_1' \# L_2').$$
By Theorem~\ref{thm-EFM}, $\s_-^k(L_1 \# L_2) = \s_-^l(L_1' \# L_2')$ in $\LL(\K_1 \# \K_2)$ for some $k, l \geq 0$. Since Legendrian stabilization is well defined (we can move the zig-zags anywhere) we have
$L_1 \# \s_-^k(L_2) = L_1' \# \s_-^l(L_2')$.
By Theorem~\ref{EH-thm}, 
$$(L_1, \s_-^k(L_2))\approx (L_1',  \s_-^l(L_2')).$$ 
Recall our assumption that $T_2 \sim T_2'$ cannot be tansversely destabilized. Thus Proposition~\ref{prop} implies that: 
\begin{claim}\label{L_2}
$L_2$ and $L_2'$ cannot be positive Legendrian destabilizable. 
\end{claim}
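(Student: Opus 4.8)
The plan is to prove Claim~\ref{L_2} by contradiction, translating a hypothetical positive Legendrian destabilization of $L_2$ (or $L_2'$) into a transverse destabilization of $T_2$ (or $T_2'$), which is forbidden by hypothesis. Concretely, suppose $L_2$ were positively destabilizable, so that $L_2 = \s_+(L_2'')$ for some Legendrian knot $L_2''$. The key bridge is Proposition~\ref{prop}, which identifies positive Legendrian stabilization with transverse (negative braid) stabilization at the level of positive transverse push-offs: $T_+(\s_+(L)) \sim S(T_+(L))$.

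First I would record that, since $L_2$ is a Legendrian push-off realizing $T_2$, we have $T_+(L_2) \sim T_2$ under the identification of transverse knots with closed braids. Applying $T_+$ to the supposed destabilization $L_2 = \s_+(L_2'')$ and invoking Proposition~\ref{prop} gives
\begin{equation}\label{destab-chain}
T_2 \sim T_+(L_2) = T_+(\s_+(L_2'')) \sim S(T_+(L_2'')).
\end{equation}
Thus $T_2$ is transversely isotopic to a transverse \emph{stabilization} $S(T_+(L_2''))$ of the transverse knot $T_+(L_2'')$. Reading this the other way, $T_+(L_2'')$ is a transverse destabilization of $T_2$, so $T_2$ \emph{can} be transversely destabilized. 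This directly contradicts the standing hypothesis that $T_2 \sim T_2'$ cannot be transversely destabilized. The same argument applied to $L_2'$, using $T_+(L_2') \sim T_2' \sim T_2$, rules out positive destabilizability of $L_2'$.

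The step I expect to require the most care is making the identification $T_+(L_2) \sim T_2$ precise and compatible with the braid/Legendrian dictionary so that Proposition~\ref{prop} applies verbatim; in particular one must check that the $+$-stabilization moving the Legendrian zig-zag corresponds, under $T_+$, to \emph{exactly} the negative braid stabilization that defines transverse stabilization, and not to some other (positive) stabilization that would be a transverse isotopy rather than a genuine destabilization. Once the orientations and signs are matched — which is precisely the content of Proposition~\ref{prop} — the logical skeleton is just the contrapositive: positive Legendrian destabilizability of $L_2$ forces transverse destabilizability of $T_2$, contradicting the hypothesis. I would therefore present the proof as the short chain~\eqref{destab-chain} followed by the contradiction, with a one-line remark that the symmetric statement for $L_2'$ follows identically.
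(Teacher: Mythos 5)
Your proposal is correct and is exactly the argument the paper intends: the paper's ``proof'' of Claim~\ref{L_2} is the single line ``Recall our assumption that $T_2 \sim T_2'$ cannot be transversely destabilized. Thus Proposition~\ref{prop} implies that\dots,'' and you have simply spelled out that implication via the chain $T_2 \sim T_+(\s_+(L_2'')) \sim S(T_+(L_2''))$. Your added care about choosing the Legendrian representatives so that $T_+(L_2)\sim T_2$ and about matching the sign conventions in Proposition~\ref{prop} is a faithful (and slightly more explicit) rendering of the same approach.
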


Suppose that $\K_1 \neq \K_2$. By the definition of $\approx$ in Theorem~\ref{EH-thm} and Claim~\ref{L_2}, we have for some $m, n \in \Z$ and $x, y \in \Z_{\geq 0}$. 
$$(\s_+^{-x} \s_-^m(L_1), \ \s_+^x \s_-^{k-m}(L_2))= (\s_+^{-y} \s_-^n(L_1'), \   \s_+^y \s_-^{l-n}(L_2')).$$ 
By Theorem~\ref{thm-EFM} and Proposition~\ref{prop} we have
$$S^x(T_2) \sim T_+(\s_+^x \s_-^{k-m}(L_2)) = T_+(\s_+^y \s_-^{l-n}(L_2')) \sim S^y(T_2')$$
and obtain $x=y$. Then, $$S^{-x}(T_1) \sim T_+(\s_+^{-x} \s_-^m(L_1)) \sim T_+(\s_+^{-y} \s_-^n(L_1')) \sim S^{-x}(T_1')$$
and we obtain $T_1 \sim T_1'$, which is a contradiction. 

Suppose that $\K_1 = \K_2$. We have either
$$(\s_+^{-x} \s_-^m(L_1), \ \s_+^x \s_-^{k-m}(L_2))= (\s_+^{-y} \s_-^n(L_1'), \   \s_+^y \s_-^{l-n}(L_2'))$$ 
or
$$(\s_+^{-x} \s_-^m(L_1), \ \s_+^x \s_-^{k-m}(L_2))= ( \s_+^y \s_-^{l-n}(L_2'), \ \s_+^{-y} \s_-^n(L_1'))$$
for some $m, n \in \Z$ and $x, y \in \Z_{\geq 0}$. The first case is covered in the case when $\K_1 \neq \K_2$.
In the latter case, we obtain by Theorem~\ref{thm-EFM} and Proposition~\ref{prop}, 
$$S^{-x}(T_1) \sim S^y(T_2') \mbox{ and } S^x(T_2) \sim S^{-y}(T_1')$$ 
Since $T_2 \sim T_2'$ we have $S^{x+y}(T_2)  \sim S^{x+y}(T_2')$. Therefore,
$$T_1 \sim S^{-x+x}(T_1) \sim S^{-y+y}(T_1') \sim T_1',$$
which is a contradiction.
\end{proof}

\end{document}